\newtheorem{remark}{Remark}
\newtheorem{lemma}{Lemma}
\newtheorem{definition}{Definition}
\newcommand{\eg}{{\it e.g. }}
\newcommand{\ie}{{\it i.e. }}
\newcommand{\bs}[1]{\boldsymbol{#1}}
\newcommand{\mc}[1]{\mathcal{#1}}
\newcommand{\mbb}[1]{\mathbb{#1}}
\newcommand{\mbf}[1]{\mathbf{#1}}
\newcommand{\diag}{\text{diag}}
\definecolor{fblue}{rgb}{0.1,0.3,0.7}
\title{\LARGE \bf Guaranteeing Input Tracking For  Constrained Systems: Theory and Application to Demand Response$^{*}$}
\author{Tomasz T. Gorecki$^{1,\dagger}$, Altu\u{g} Bitlislio\u{g}lu$^{1}$, Giorgos Stathopoulos$^{1}$ \\ and Colin N. Jones$^{1}$
\thanks{This work has received support from the Swiss National Science Foundation under the GEMS project (grant number 200021 137985) and the European Research Council under the European Union’s Seventh Framework Programme (FP/2007-2013)/ ERC Grant Agreement n. 307608 (BuildNet). }
\thanks{Automatic Control Laboratory, EPFL, Switzerland.}
\thanks{Corresponding author:  {\tt\small tomasz.gorecki@epfl.ch}}
}
\begin{document}
\maketitle
\thispagestyle{empty}
\pagestyle{empty}

\begin{abstract}
A method for certifying exact input trackability for constrained discrete time linear systems is introduced in this paper. A signal is assumed to be drawn from a reference set and the system must track this signal with a linear combination of its inputs. Using methods inspired from robust model predictive control, the proposed approach certifies the ability of a system to track any reference drawn from a polytopic set on a finite time horizon by solving a linear program. Optimization over a parameterization of the set of reference signals is discussed, and particular instances of parameterization of this set that result in a convex program are identified, allowing one to find the largest set of trackable signals of some class. Infinite horizon feasibility of the methods proposed is obtained through use of invariant sets, and an implicit description of such an invariant set is proposed. These results are tailored for the application of power consumption tracking for loads, where the operator of the load needs to certify in advance his ability to fulfill some requirement set by the network operator. An example of a building heating system illustrates the results. 
\end{abstract}

\section{Introduction}

This work proposes a methodology to handle input tracking for constrained discrete-time linear systems. More precisely, restriction of the inputs to a particular subspace of the input space is considered. For example, restricting the power consumption of a multi-input multi-output (MIMO) system to track a particular signal over time, the power consumption being a function of the inputs of this system, falls into our characterization of input tracking. In a nutshell, the method guarantees that, on a finite horizon, any signal drawn from a polyhedral set can be tracked exactly as a function of the inputs of our system, assuming only knowledge of this reference set. It ensures that whatever will be drawn from the reference set, the system can follow while still satisfying input and state constraints, as well as remain indefinitely feasible after that. The method also computes a controller that allows the system to track the reference causally when it is revealed only one step at a time. 

Another way to look at the method is as a disturbance rejection scheme, where the disturbance restricts the inputs to lie into some particular subspace. The control method proposed uses results from the robust MPC literature \cite{goulart_optimization_2006}.

Provision of tracking certifications for constrained systems is a problem that has recently been gaining attention. Contributions from the output tracking literature \cite{maeder_offset-free_2010, isidori_output_1990} study asymptotic tracking of the output of a reference generator for continuous-time systems in an unconstrained setting. Our problem differs from this classical setting in several regards: first, we consider discrete-time systems and exact tracking of the reference on a finite time horizon. Secondly, the references are not the output of a generator system, but are assumed to be drawn from a convex set. Finally, we consider tracking as a function of the inputs only for constrained systems. A recent work in which guarantees for inexact output tracking for constrained system have been considered is \cite{di_cairano_constrained_2013}, where the problem is tackled by means of robust invariance. To provide infinite horizon guarantees, so-called max-min invariant sets need to be computed. The conservatism in the computation of these sets leads to only inexact tracking guarantees and limits the scalability of the approach. We avoid these issues by ensuring tracking on a finite-time horizon, while still maintaining infinite horizon feasibility.

Related ideas have emerged in works trying to characterize the flexibility in power consumption of energy systems. Indeed, the increasing need for regulating power on the grid has pushed authors to consider load-side participation through programs such as Demand Response (DR) \cite{albadi_summary_2008}, which incentivize loads to modify their power consumption through price or request mechanisms, or to provide ancillary services \cite{galus_provision_2011}. \cite{m	amaasoumy_model_2014} considers power envelopes to characterize the power consumption flexibility of SISO systems, while the authors of \cite{sanandaji_improved_2013} consider the aggregated representation of Thermostatically Controlled Load (TCL) populations by simple battery models. These approaches are partly subsumed in this work. For example, the proposed method generalizes concepts of \cite{maasoumy_model_2014} by considering MIMO systems and a more general tracking characterization. \cite{vrettos_robust_2014} studies the aggregation of multiple buildings, and could be combined with our approach to leverage more flexibility from the buildings. Finally, \cite{oldewurtel_framework_2013} studies the DR potential of different types of loads regarding more practical aspects of the implementation and therefore complements this work.

This paper tackles the theoretical questions that arise from the constrained input tracking problem, and make the connection to the building application with an example. The contribution of the paper is three-fold: First, it gives a method for a priori certification of input trackability of a set of references. Secondly, it shows how the reference set can be optimized in a tractable fashion, for example to derive the largest possible set of references that can be tracked out of a particular class. Lastly, infinite horizon feasibility is discussed and an implicit characterization of an invariant set is proposed.

The paper is organized as follows: Section~\ref{sc:formulation} introduces the general formulation of the input trackability problem. Section~\ref{sc:solve} restricts the general problem to solve it with convex programming methods. Section~\ref{sc:optimDist} and ~\ref{sc:ScaleDist} introduce an approach to optimize over the reference set and presents a parameterization of the reference set that renders the problem convex. Section~\ref{sc:invariantSet} presents an implicit parametrization of the invariant set. Finally, Section~\ref{sc:Applications} illustrates the method on the case of a building heating ventilation and air conditioning (HVAC) system offering flexibility to a power grid operator.

\section{Formulation of the problem\label{sc:formulation}}

We consider constrained discrete-time linear system of the form:
\begin{equation}
\label{eq:dyn}
x_{i+1} = Ax_i + Bu_i\enspace,
\end{equation} 
where $x \in \mbb{R}^{n_x}$ and $u \in \mbb{R}^{n_u}$ are the state and input of the system, which are constrained to lie in the set:
\begin{equation*}
(x,u) \in \mbb X \times \mbb U = \mbb Z 
\end{equation*} 

The reference signal that the system is required to track at step $i$ is represented by $r_i \in \mbb R$. The reference $r$ is required to be tracked by a linear function of the control input $u$ so that:
\begin{equation}
g^Tu_i=r_i 
\label{eq:inputout}
\end{equation}
\begin{remark}
For simplicity, we consider tracking of a one-dimensional signal. Multi-dimensional signal tracking is a straightforward extension.
\end{remark}
\begin{remark}
In the situation at hand, the only source of uncertainty is the reference tracking signal $r$. It is assumed that future values of the reference are unknown but that the current value to be tracked is known. 
\end{remark}
\begin{remark}
In this paper, only input tracking is considered. The reason is two-fold: first, the application motivating this work (power consumption tracking for loads) only requires input tracking. Second, requiring exact tracking is generally not possible for the output tracking case. Even if the reference is known in advance, it might not be possible to track it exactly because of finite-time reachability issues. Formulations which relax the tracking requirement have to be used in this case \cite{di_cairano_constrained_2013}. 
\end{remark}

In the following, bold letters denote trajectories over a horizon, \eg ${\bf u}_i = (u_0^T,u_1^T,\ldots,u_{i-1}^T)^T$. For the remainder of the paper, we will consider a fixed horizon $N$, and so the subscript $N$ will be dropped when it is clear from the context. The map $\pi_i( x_i, r_i): \mbb X \times \mbb R \rightarrow \mbb R^{n_u}$ represents a causal feedback policy to be used at step $i$ after observing the current state $x_i$ and reference $r_i$. $\phi_i (x_0,\mbf r_i, \bs \pi_i)$ denotes the state of the system \eqref{eq:dyn} at step $i$, if it starts from state $x_0$, applies the control policy sequence $\bs{\pi}_i$, and receives the reference sequence $\mbf r_i$.

\begin{remark}
 It is assumed that the reference signal is observed at the time it needs to be tracked. Therefore the only source of uncertainty is the future values of the reference tracking signal $r$. However, the above definition and following results can be easily adapted to situations where the system has access to future reference values.
 \end{remark}
\begin{definition} \label{df:trackability}The set $\mc P \subset \mbb R^N$ is \textit{input trackable} over horizon $N$ by system \eqref{eq:dyn} in state $x_0$ if there exists a sequence of feedback policies $\bs \pi_\infty $ such that the following conditions hold:
\begin{equation}
\begin{aligned}
\phi_i (x_0,\mbf r_i, \bs \pi_i) \in \mbb X, \;  & \forall {\mbf r_{N-1}}  \in \mc P,\,\forall i \geq 0 \\
\pi_i(\mbf x_i , r_i) \in \mbb U, \;  & \forall {\mbf r_{N-1}}  \in \mc P,\,\forall i  \geq 0\\
g^T\pi_i(\mbf x_i, r_i) = r_i,\;& \forall {\mbf r_{N-1}}  \in \mc P,\,\forall i \in \{0,\ldots,N-1\}\\
\label{eq:trackcon}
\end{aligned}
\end{equation}
\end{definition}
The above definition asserts the ability of the system to track all possible reference sequences that can be drawn from the set $\mc P$ using causal feedback policies. It requires that the system tracks the reference up to time $N$ and then remains feasible. 

Given a set $\mc P$, our goal is to find a policy sequence $\bs \pi_\infty$ that satisfies the requirements of Definition \ref{df:trackability}. It is not tractable to look for a control policy over an infinite horizon. However, by enforcing the state $\phi_N$ to lie in a set $\mbb X_f$ which is controlled-invariant for the system \eqref{eq:dyn}, it is sufficient to find the first $N$ elements of the policy sequence, described by $ \bs \pi_{N-1}$, while inclusion of $\phi_N$ in $\mbb X_f$ guarantees the existence of the remaining elements of $ \bs \pi_\infty$. This method is a classical technique in Model Predictive Control. Even considering the finite sequence $\bs \pi_{N-1}$, it is difficult to solve this problem due to the infinite dimension of the policy space. The next section presents a finite parametrization of $\bs \pi_{N-1}$.

\section{Tracking with affine feedback policies}

\subsection{Tracking on a finite horizon\label{sc:solve}}
As reviewed in \cite{goulart_optimization_2006}, affine parameterizations have been introduced due to their nice computational properties (notably the convexity of the set of such admissible policies). Though additive state disturbances are not considered here, we show how to exploit the results in \cite{goulart_optimization_2006} in the following developments.

We consider the case where the set of feasible state and input constraints are polytopic and given by

\begin{equation}
\begin{aligned}
\label{eq:polycons}
&\mbb Z = \left\lbrace (x,u) \; \middle | \; Fx + Gu \leq c \right\rbrace  \enspace,\\
&\mbb X_f = \{ x \; | \; Hx \leq p\}\enspace.
\end{aligned}
\end{equation} 

The state sequence $\mbf{x}$ is fully determined by $\mbf{u}$ and the initial condition $x_0$, so that the dynamics and constraints are given by:
\begin{equation}
\begin{aligned}
&\mbf{x} = \mbf{A}x_0+ \mbf{Bu} \\
&\mbf{F}\mbf x+\mbf{Gu} \leq \mbf{c}\\
\label{eq:densedyn}
\end{aligned}
\end{equation}
in condensed form with appropriate matrices. Let us consider the following affinely parameterized control policy:

\begin{equation}
u_i=\sum_{j=0}^{i} M_{i,j}r_j + v_i, \quad \forall i \in \{0,...,N-1\}
\end{equation}

Using condensed notation:
\begin{equation}
\mbf{u} = \mbf{Mr+v}
\label{eq:ddistfeed}
\end{equation}
where $\mbf M$ is block lower triangular.

Policy~\eqref{eq:ddistfeed} is a reference sequence feedback policy. As established in \cite{goulart_optimization_2006}, it is at least as general as an affine state feedback policy, since past references and the current state are related in an affine fashion. Therefore the policy~\eqref{eq:ddistfeed} is compliant with the \textit{input trackability} conditions \eqref{eq:trackcon}. 

According to the input trackability conditions \eqref{eq:trackcon}, the set of admissible affine parameterizations $(\mbf{M},\mbf{v})$  can be written as:

\begin{equation}
\mathcal{F}(x_0, \mc P) = \left\{(\bf M, \bf v) \middle |
\begin{aligned} 
&\forall \mbf r \in \mc P\\
& \Sigma \mbf{u} \leq \sigma + \Xi x_0\\
&\mbf{u}=\mbf{M}\mbf{r}+\mbf{v}\\
& \Gamma \mbf{u} =\mbf r\\
&\mbf M \in \mc{LT}
\end{aligned}
\right\}\enspace,
\label{eq:dfset}
\end{equation}

where $\Sigma := \mbf{FB+G} $, $\sigma := \mbf{c}$ and $\Xi := -\mbf{FA}$ and $\Gamma := \mc I_N \otimes g^T$, with $\mc I_N$ the identity matrix of size $N$ and $\otimes$ the Kronecker product of the matrices. The structural constraint on $\mbf M$ such that it is a lower block triangular matrix is denoted as $\mbf M \in \mc{LT}$. In the sequel, we will simply write $\mc F(x_0)$ omitting the dependency on $\mc P$.

Let us also define the set of initial states $x$ for which an admissible policy exists:

\begin{equation}
\mc{X}:= \left\lbrace x \in \mathbb{R}^n \mid \mc F(x) \neq \emptyset \right\rbrace \enspace.
\end{equation}

\begin{lemma}
Both $\mc F (x)$ and $\mc X$ are convex.
\end{lemma}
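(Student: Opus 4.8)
The plan is to establish convexity of $\mc F(x)$ directly from its definition \eqref{eq:dfset} as an intersection of convex sets in the decision variables $(\mbf M, \mbf v)$, and then to obtain convexity of $\mc X$ by exhibiting it as the projection of a convex set onto the $x$-coordinates. Since the two assertions share the same underlying structure, the first argument will do most of the work.

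First I would substitute $\mbf u = \mbf M \mbf r + \mbf v$ to eliminate $\mbf u$ and rewrite the defining constraints of $\mc F(x)$ purely in terms of $(\mbf M, \mbf v)$. For each fixed $\mbf r \in \mc P$, the inequality $\Sigma(\mbf M \mbf r + \mbf v) \leq \sigma + \Xi x$ and the equality $\Gamma(\mbf M \mbf r + \mbf v) = \mbf r$ are both affine in the entries of $\mbf M$ and $\mbf v$, since $\mbf r$ enters only as a fixed parameter; each therefore carves out a half-space (respectively an affine subspace) in $(\mbf M, \mbf v)$-space. The structural requirement $\mbf M \in \mc{LT}$ is itself a linear subspace constraint. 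Since $\mc F(x)$ is the intersection over all $\mbf r \in \mc P$ of these convex sets, together with $\mc{LT}$, and an arbitrary intersection of convex sets is convex, I conclude that $\mc F(x)$ is convex.

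For $\mc X$ I would introduce the lifted set $\mc G := \{(x, \mbf M, \mbf v) \mid (\mbf M, \mbf v) \in \mc F(x)\}$. Reading off the same constraints, but now treating $x$ as a free variable, the inequality $\Sigma(\mbf M \mbf r + \mbf v) - \Xi x \leq \sigma$ is affine jointly in $(x, \mbf M, \mbf v)$ for each fixed $\mbf r$ (the dependence on $x$ through $\Xi x$ is linear), while the equality and the $\mc{LT}$ constraint are unchanged; hence $\mc G$ is again an intersection of convex sets and is convex. Observing that $\mc X = \proj_x(\mc G)$, and that the image of a convex set under the linear projection map is convex, yields convexity of $\mc X$.

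The only point requiring care is the appearance of the product $\mbf M \mbf r$, which at first glance looks bilinear and thus a potential threat to convexity. The resolution is that $\mbf r$ is a parameter quantified universally over $\mc P$, not a decision variable, so for each instantiation of $\mbf r$ the constraint is genuinely affine in $(\mbf M, \mbf v)$; the universal quantifier then contributes only an intersection, which preserves convexity regardless of the geometry of $\mc P$. In particular, neither finiteness nor polyhedrality of $\mc P$ is needed for this argument.
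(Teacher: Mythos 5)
Your argument is correct and follows essentially the same route as the paper: $\mc F(x)$ is exhibited as an intersection over $\mbf r \in \mc P$ of sets that are affine (hence convex) in $(\mbf M, \mbf v)$, and $\mc X$ as the linear projection of the analogous lifted convex set in $(x, \mbf M, \mbf v)$. Your closing observation that no convexity of $\mc P$ is required matches the remark the paper makes immediately after its proof.
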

\begin{proof}
\begin{equation}
\mc F(x) = \bigcap_{\mbf r \in \mc P}
\left\{(\bf M, \bf v) \middle |
\begin{aligned} 
& \Sigma \mbf{u} \leq \sigma + \Xi x_0\\
&\mbf{u}=\mbf{M}\mbf{r}+\mbf{v}\\
& \Gamma \mbf{u} =\mbf r\\
&\mbf M \in \mc{LT}
\end{aligned}
\right\}
\end{equation}
Written as such, $\mc F(x)$ is clearly the intersection of a family of convex sets and therefore is convex. $\mc{X}$ can be written as the projection on the $x$ subspace of the set defined as:
\begin{equation}
\left\{(\mbf M, \mbf v, x) \middle |
\begin{aligned} 
&\forall \mbf r \in \mc P\\
& \Sigma \mbf{u} \leq \sigma + \Xi x\\
&\mbf{u}=\mbf{M}\mbf{r}+\mbf{v}\\
& \Gamma \mbf{u} =\mbf r\\
&\mbf M \in \mc{LT}
\end{aligned}
\right\}
\end{equation}
This set can itself similarly be written as the intersection of a family of convex sets, and therefore is convex, as is its projection $\mc{X}_N$.
\end{proof} 
\begin{remark}
The reference set $\mc P$ needs not be time-invariant along the horizon, nor does it need to be time-uncorrelated. It does not even need to be convex for the previous lemma to hold.
\end{remark}

\begin{lemma}
\label{thm:dualization}
If $\mc P$ is a full-dimensional polyhedral set described by:
\begin{equation}
\label{eq:distSet}
\mc P = \left\{ \mbf r \;\middle|\; S\mbf r \leq h \right\}\enspace,
\end{equation}
then
\begin{equation}
\mathcal{F}(x_0) = \left\{(\bf M, \bf v) \middle |
\begin{aligned} 
&\exists \mbf{Z} \geq 0 \quad \text{s.t.}\\
&\Sigma \mbf{v} + \mbf{Z}^Th \leqslant \sigma + \Xi x_0 \\
&\mbf{Z}^T S= \Sigma\mbf{M}\\
& \Gamma\mbf M = \mc I_N, \, \Gamma\mbf v = 0\\
&\mbf M \in \mc{LT}
\end{aligned}
\right\}\enspace
\label{eq:dfset2dual}
\end{equation}
\end{lemma}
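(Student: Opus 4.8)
The plan is to rewrite the definition \eqref{eq:dfset} of $\mc F(x_0)$ by substituting the policy $\mbf u = \mbf{Mr}+\mbf v$ and then eliminating the universal quantifier over $\mbf r \in \mc P$ from the two resulting constraints separately: an affine \emph{equality} in $\mbf r$, and a family of affine \emph{inequalities} in $\mbf r$. The structural constraint $\mbf M \in \mc{LT}$ is untouched throughout and simply carries over.

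First I would treat the equality constraint. Substitution gives $\Gamma\mbf M\,\mbf r + \Gamma\mbf v = \mbf r$ for every $\mbf r \in \mc P$, i.e. the affine map $\mbf r \mapsto (\Gamma\mbf M - \mc I_N)\mbf r + \Gamma\mbf v$ vanishes on all of $\mc P$. This is where full dimensionality of $\mc P$ is essential: an affine map vanishing on a set with nonempty interior is the zero map. Concretely, picking an interior point $\mbf r_0$ and perturbing it in each coordinate direction shows $(\Gamma\mbf M - \mc I_N)\,\delta = 0$ for all $\delta$ in a neighbourhood of the origin, hence $\Gamma\mbf M = \mc I_N$; evaluating the equation at $\mbf r_0$ then gives $\Gamma\mbf v = 0$. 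These are exactly the two equalities appearing in \eqref{eq:dfset2dual}.

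Next I would dualize the inequality constraint, which after substitution reads $\Sigma\mbf M\,\mbf r + \Sigma\mbf v \leq \sigma + \Xi x_0$ for all $\mbf r \in \mc P$ and is a semi-infinite (robust) constraint. I would treat it row by row: for row $k$ the requirement is $\max_{\mbf r\,:\,S\mbf r \leq h}\,(\Sigma\mbf M)_k\,\mbf r \leq (\sigma + \Xi x_0 - \Sigma\mbf v)_k$. By strong LP duality this inner maximum equals $\min\{\,h^T z_k \;:\; z_k^T S = (\Sigma\mbf M)_k,\ z_k \geq 0\,\}$, so the robust row constraint holds if and only if there exists $z_k \geq 0$ with $z_k^T S = (\Sigma\mbf M)_k$ and $h^T z_k \leq (\sigma + \Xi x_0 - \Sigma\mbf v)_k$. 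Collecting the $z_k$ as the columns of $\mbf Z \geq 0$ stacks these row conditions into $\mbf Z^T S = \Sigma\mbf M$ and $\Sigma\mbf v + \mbf Z^T h \leq \sigma + \Xi x_0$, which are the remaining constraints of \eqref{eq:dfset2dual}.

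The reverse inclusion is elementary: given such a $\mbf Z$, weak duality $(\Sigma\mbf M)_k\,\mbf r = z_k^T S\,\mbf r \leq z_k^T h$ for any $\mbf r \in \mc P$ immediately recovers robust feasibility. The crux is the forward direction, and the main obstacle is certifying that strong duality applies to each inner LP, i.e. that the primal maximum is finite and attained. Nonemptiness of $\mc P$ supplies primal feasibility, and the robust inequality being assumed to hold caps the primal objective, excluding an unbounded primal; strong duality then guarantees the dual optimum is attained, which is exactly what licenses the existence of $\mbf Z$. I expect this duality bookkeeping — together with the full-dimensionality argument for the equality part — to be the only nonroutine elements of the proof.
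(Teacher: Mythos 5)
Your proposal is correct and follows the same route as the paper's proof: the equalities are reduced to $\Gamma\mbf M = \mc I_N$, $\Gamma\mbf v = 0$ via full-dimensionality of $\mc P$, and the robust inequalities are handled row-wise by replacing the universal quantifier with an inner maximization and invoking LP duality to introduce $\mbf Z$. You simply spell out the details (the affine-map-vanishing argument and the feasibility/boundedness conditions licensing strong duality) that the paper delegates to a citation.
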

\begin{proof}
Notice first that since $\mc P$ is full-dimensional, the linear equalities $\forall \mbf r \in \mc P,\, \Gamma (\mbf M \mbf r + \mbf v) = \mbf r$ result in $\Gamma\mbf M = \mc I_N$ and $\Gamma\mbf v = 0$ by balancing both sides of the equation. 
For the inequality constraints, the universal quantifier can be removed via dualization. One can replace the universal quantifier with a maximization:
\begin{gather*}
\forall \mbf r \in \mc P,\, \Sigma (\mbf M \mbf r + \mbf v) \leq \sigma + \Xi x_0\\
\Leftrightarrow\\
\Sigma \mbf v + \max_{S\mbf r \leq h} \Sigma \mbf{Mr} \leq \sigma + \Xi x_0
\end{gather*}
where the maximization is taken row-wise. Dualizing these maximization problems \cite{goulart_optimization_2006, lofberg_automatic_2010} and introducing the dual variable $\mbf Z$ associated to the inequality constraints describing $\mc P$ in the different maximization problems, the description of the set $\mc F$ reduces to \eqref{eq:dfset2dual}.
\end{proof}

Restricting ourselves to polyhedral reference sets and affinely parameterized control policies, we can solve the\emph{ tracking certification problem} described by conditions \eqref{eq:trackcon} tractably by solving a single LP.

\subsection{Optimizing over the reference set\label{sc:optimDist}}

In general, one is looking for the "largest" set of reference signals that the system can track. Suppose the reference set $\mc P$ is parameterized with some parameters $\theta\in\mathbb{O} \subseteq \mathbb{R}^{n_\theta}$. Let us further assume that for all values of $\theta$, $\mc P$ is a polyhedral set.

For simplicity, let us redefine the notations of the previous sections as follows: For a particular value of $\theta$, we define:
\begin{align}
\mc{F}_{\theta}(x_0) &:= \left\{(\mbf M, \mbf v) \middle |
\begin{aligned} 
&\forall \mbf r \in \mc P(\theta)\\
& \Sigma (\mbf{Mr+v}) \leq \sigma + \Xi x_0\\
& \Gamma (\mbf{Mr+v}) =\mbf r\\
&\mbf M \in \mc{LT}
\end{aligned}
\right\} \enspace,
\end{align}
\begin{equation}
\mc{X}_{\theta}:= \left\lbrace x \in \mathbb{R}^n \mid \mc F_{\theta}(x) \neq \emptyset \right\rbrace \enspace,
\end{equation}
where $\mc{F}_{\theta}(x_0)$ is the set of all admissible affine disturbance feedback policies and $\mc{X}_{\theta}$ the set of feasible initial states for a particular value of $\theta$. We further define
\begin{equation}
\Theta(x_0) := \left\{ \theta \mid \mc{F}_{\theta}(x_0) \neq \emptyset \right\}
\end{equation}
as the set of all parameters defining the reference set for which there exists an admissible affine policy. 

Ultimately, the aim is to optimize over the "size" of the set $\mc P(\theta)$. Already, it is noticeable in characterization \eqref{eq:dfset2dual} that a linear parametrization of $S$ and $h$ in $\theta$ results in a problem with bilinear equalities and inequalities, which hints that the problem would in most cases be nonconvex. The following subsection presents instances of the problem for which it can be solved efficiently. Essentially we are looking for special cases where the parametrization of the disturbance set results in a convex search space $\Theta(x_0)$.

\subsection{Scaling of a fixed shape polytope\label{sc:ScaleDist}}

Let us consider the parametrization $\mc P(\theta) = \Lambda \mc T + \lambda = \left\{ \Lambda\mbf r+\lambda \mid \mbf r \in \mc T \right\}$ where $\mc T$ is a given polyhedron of dimension $N$, $\Lambda$ a diagonal scaling matrix of dimension $N\times N$ and $\lambda\in \mathbb{R}^N$ an offset vector. In the following, we show that we can efficiently optimize over $\theta = (\Lambda, \lambda)$. The following lemma is instrumental for this.

\begin{lemma}
\label{thm:scaling}
If
\begin{equation*}
\mc A = \left\{(\bf M, v) \middle |
\begin{aligned}
&\forall \mbf r \in \mc P\\
&\Sigma (\mbf{Mr+v}) \leq \sigma\\
&\Gamma\mathbf{v}=\lambda,\, \Gamma\mathbf{M} = \Lambda\\
&\mbf M \in \mc{LT}
\end{aligned}
\right\}
\end{equation*}
\begin{equation*}
\mc B = \left\{(\bf M, v) \middle |
\begin{aligned}
&\forall \mbf r \in \Lambda \mc P + \lambda\\
&\Sigma (\mbf{Mr+v}) \leq \sigma\\
&\Gamma\mathbf{v}=0,\, \Gamma\mathbf{M} = \mc I_N\\
&\mbf M \in \mc{LT}
\end{aligned}
\right\}
\end{equation*}
with $\Lambda$ a diagonal invertible matrix of appropriate dimension, then
\begin{align*}
\mc A = \emptyset \text{ if and only if }\mc B = \emptyset.
\end{align*}
\end{lemma}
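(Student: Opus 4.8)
The plan is to prove the stronger statement that $\mc A$ and $\mc B$ are in affine bijection; emptiness of one is then immediately equivalent to emptiness of the other. The guiding observation is that the two sets encode the same tracking requirement written in two different reference coordinates: in $\mc A$ the decision variables make the input track $\Lambda\mbf r+\lambda$ as $\mbf r$ ranges over $\mc P$ (this is exactly what the equalities $\Gamma\mbf M=\Lambda$, $\Gamma\mbf v=\lambda$ say), whereas in $\mc B$ they track $\mbf r'$ directly as $\mbf r'$ ranges over $\Lambda\mc P+\lambda$. Since $\Lambda$ is invertible, the substitution $\mbf r'=\Lambda\mbf r+\lambda$ is a bijection between $\mc P$ and $\Lambda\mc P+\lambda$, and this is the identity I would exploit to match the two universal quantifiers.

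Concretely, I would define the map $\Phi(\mbf M,\mbf v)=(\mbf M\Lambda^{-1},\,\mbf v-\mbf M\Lambda^{-1}\lambda)$ and claim it sends $\mc A$ into $\mc B$. To verify this, take $(\mbf M,\mbf v)\in\mc A$ and set $(\mbf M',\mbf v')=\Phi(\mbf M,\mbf v)$. For any $\mbf r'\in\Lambda\mc P+\lambda$, writing $\mbf r'=\Lambda\mbf r+\lambda$ with $\mbf r\in\mc P$, a direct computation gives $\mbf M'\mbf r'+\mbf v'=\mbf M\mbf r+\mbf v$, so the inequality $\Sigma(\mbf M'\mbf r'+\mbf v')\leq\sigma$ follows from the corresponding inequality in $\mc A$. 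The equalities transform as $\Gamma\mbf M'=\Gamma\mbf M\Lambda^{-1}=\Lambda\Lambda^{-1}=\mc I_N$ and $\Gamma\mbf v'=\Gamma\mbf v-\Gamma\mbf M\Lambda^{-1}\lambda=\lambda-\lambda=0$, which are precisely the defining equalities of $\mc B$.

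The one structural point that needs care — and the main obstacle, such as it is — is preservation of the constraint $\mbf M'\in\mc{LT}$. Right-multiplication by $\Lambda^{-1}$ only rescales the (block) columns of $\mbf M$, because $\Lambda$ is diagonal, so the block lower-triangular zero pattern is untouched; this is exactly where both invertibility and diagonality of $\Lambda$ enter. Finally, I would check in the same way that the inverse map $\Phi^{-1}(\mbf M,\mbf v)=(\mbf M\Lambda,\,\mbf v+\mbf M\lambda)$ sends $\mc B$ into $\mc A$, so that $\Phi$ is a genuine bijection between the two sets. In particular $\mc A=\emptyset$ if and only if $\mc B=\emptyset$, which is the claim.
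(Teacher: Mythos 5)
Your proof is correct and follows essentially the same route as the paper: the explicit map $(\mbf M,\mbf v)\mapsto(\mbf M\Lambda^{-1},\,\mbf v-\mbf M\Lambda^{-1}\lambda)$ and its inverse, with the same verification of the equality constraints, the inequality via the substitution $\mbf r'=\Lambda\mbf r+\lambda$, and the preservation of the block lower-triangular structure thanks to $\Lambda$ being diagonal. As a minor bonus, your inverse map $(\mbf M\Lambda,\,\mbf v+\mbf M\lambda)$ is the correct one, whereas the paper's converse direction contains a small typo, writing $(\mbf M\Lambda,\,\mbf v+\mbf M\Lambda^{-1}\lambda)$.
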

\begin{proof}
Suppose $\mc A$ is not empty and $(\mbf{M, v})\in \mc A$. 
Then $(\mathbf{M} \Lambda^{-1}, \mathbf{v}-\mbf M\Lambda^{-1}\lambda) \in \mc B$. 
Indeed, because $\Lambda^{-1}$ is diagonal, we also have $\mathbf{M} \Lambda^{-1}\in \mc{LT}$. Moreover, $\forall \mbf r \in \Lambda \mc P + \lambda, \; \exists \tilde{\mbf r} \in \mc P: \mbf r = \Lambda \tilde{\mbf r} + \lambda$. 
Consequently, $\Sigma (\mbf M \Lambda^{-1}\mbf r+\mbf v - \mbf M\Lambda^{-1}\lambda) = \Sigma (\mbf M \Lambda^{-1}(\Lambda \tilde{\mbf r}+\lambda)+\mbf v- \mbf M\Lambda^{-1}\lambda) = \Sigma (\mbf M \tilde{\mbf r}+\mbf v) \leq \sigma$. The last inequality comes from the definition of $\mc A$.
Secondly, $\Gamma\mbf M\Lambda^{-1} = \Lambda\Lambda^{-1} = \mc I_N$ and $\Gamma(\mbf v-\mbf M\Lambda^{-1}\lambda)=\lambda-\Gamma\mbf M\Lambda^{-1}\lambda=0$. These together mean that $(\mbf M \Lambda^{-1}, \mbf v-\mbf M\Lambda^{-1}\lambda) \in \mc B$. Conversely, if $(\mbf M,\mbf v) \in \mc B$, then $(\mbf M\Lambda, \mbf v+\mbf M \Lambda^{-1}\lambda) \in \mc A$
\end{proof}

\begin{remark}
It can be useful to think of $\lambda$ as the nominal input trajectory of the system and the diagonal of $\Lambda$ as the flexibility around this nominal trajectory.
\end{remark}

Considering the parameterization $\mc P(\theta) = \diag(\theta_1)\mc T+\theta_2$, convexity of $\Theta_N(x_0)$ follows.

\begin{lemma}
\label{thm:cvxscale}
If $\theta =(\theta_1,\theta_2) \in \mathbb{R}_{+}^N\times \mathbb{R}^N$ where $\mathbb{R}_{+}$ is the real positive line and $\mc P(\theta) = \diag(\theta_1)\mc T + \theta_2$ where $\text{diag}(\theta_1)$ denotes the diagonal matrix with diagonal $\theta_1$, then $\Theta(x_0)$ is convex.
\end{lemma}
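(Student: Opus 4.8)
The plan is to invoke Lemma~\ref{thm:scaling} to trade the $\theta$-dependent quantifier ``$\forall \mbf r \in \mc P(\theta)$'' for a quantifier over the \emph{fixed} polytope $\mc T$, after which $\theta$ enters the feasibility description only through linear equalities. Convexity of $\Theta(x_0)$ then drops out because it becomes the image of a jointly convex set under a linear projection.

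First I would match $\mc F_\theta(x_0)$ to the set $\mc B$ of Lemma~\ref{thm:scaling}. Absorbing the fixed state term by writing $\sigma' := \sigma + \Xi x_0$ and taking $\Lambda = \diag(\theta_1)$, $\lambda = \theta_2$, $\mc P = \mc T$, we have $\mc P(\theta) = \diag(\theta_1)\mc T + \theta_2 = \Lambda \mc T + \lambda$. Since $\mc T$ is full-dimensional and $\diag(\theta_1)$ is invertible on the positive orthant, $\mc P(\theta)$ is full-dimensional, so by the balancing argument of Lemma~\ref{thm:dualization} the tracking equality $\forall \mbf r \in \mc P(\theta),\ \Gamma(\mbf{Mr+v}) = \mbf r$ is equivalent to $\Gamma\mbf M = \mc I_N,\ \Gamma\mbf v = 0$. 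Hence $\mc F_\theta(x_0)$ is exactly the set $\mc B$ of Lemma~\ref{thm:scaling} with $\sigma$ replaced by $\sigma'$, and the lemma yields
\[
\mc F_\theta(x_0) \neq \emptyset \iff \mc A_\theta \neq \emptyset,
\]
where
\[
\mc A_\theta = \left\{(\mbf M, \mbf v) \middle| \begin{aligned} &\forall \mbf r \in \mc T,\ \Sigma(\mbf{Mr+v}) \leq \sigma'\\ &\Gamma\mbf v = \theta_2,\ \Gamma\mbf M = \diag(\theta_1),\ \mbf M \in \mc{LT}\end{aligned}\right\}.
\]
The decisive gain is that in $\mc A_\theta$ the quantified set $\mc T$ no longer depends on $\theta$, and $\theta$ appears only affinely, through $\Gamma\mbf v = \theta_2$ and $\Gamma\mbf M = \diag(\theta_1)$.

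Then I would lift $\theta$ to a free variable and define
\[
\mc G := \left\{(\mbf M, \mbf v, \theta_1, \theta_2) \middle| \begin{aligned} &\forall \mbf r \in \mc T,\ \Sigma(\mbf{Mr+v}) \leq \sigma'\\ &\Gamma\mbf v = \theta_2,\ \Gamma\mbf M = \diag(\theta_1),\ \mbf M \in \mc{LT}\end{aligned}\right\}.
\]
For each fixed $\mbf r$ the constraint $\Sigma(\mbf{Mr+v}) \leq \sigma'$ is affine in $(\mbf M, \mbf v)$, so the universally quantified family is an intersection of half-spaces, hence convex; the remaining constraints are linear equalities and the subspace $\mc{LT}$. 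Therefore $\mc G$ is convex, and by the equivalence above $\Theta(x_0) = \proj_{(\theta_1,\theta_2)} \mc G$, which is convex as the image of a convex set under a linear map. Equivalently, one may pick feasible $\theta^a,\theta^b$ with $\mc A_{\theta^a},\mc A_{\theta^b}$ witnesses $(\mbf M^a,\mbf v^a),(\mbf M^b,\mbf v^b)$ and verify directly that the convex combination lies in $\mc A_{\mu\theta^a+(1-\mu)\theta^b}$, using linearity of $\Gamma(\cdot)$, $\diag(\cdot)$, $\Sigma(\cdot)$ and convexity of $\mc{LT}$.

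The step needing the most care is the invertibility of $\diag(\theta_1)$ demanded by Lemma~\ref{thm:scaling}, equivalently the full-dimensionality of $\mc P(\theta)$ used to reduce the tracking equality. On the strictly positive orthant this is automatic and is preserved under convex combinations, so the projection/combination argument closes there. If $\mathbb{R}_+$ is read to include zero, the faces where some $\theta_{1,k}=0$ collapse $\mc P(\theta)$ to lower dimension and the equivalence $\mc F_\theta \neq \emptyset \iff \mc A_\theta \neq \emptyset$ must be re-examined on that boundary; I would dispatch those faces either by a balancing argument analogous to Lemma~\ref{thm:dualization} or by a closure argument, and this is the only delicate point.
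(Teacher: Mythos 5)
Your proof is correct and follows essentially the same route as the paper: invoke Lemma~\ref{thm:scaling} to replace the $\theta$-dependent reference set by the fixed polytope $\mc T$ so that $\theta$ enters only through the linear equalities $\Gamma\mbf M = \diag(\theta_1)$, $\Gamma\mbf v = \theta_2$, and then obtain convexity of $\Theta(x_0)$ as the projection of a convex lifted set. The only differences are cosmetic --- the paper additionally removes the universal quantifier by dualization to reach the explicit LP description \eqref{eq:defTheta}, whereas you argue convexity directly via an intersection of half-spaces --- and your attention to the degenerate boundary where some $\theta_{1,k}=0$ (where $\diag(\theta_1)$ fails to be invertible and Lemma~\ref{thm:scaling} does not directly apply) is a point the paper's proof silently skips.
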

\begin{proof}

Following Lemma~\ref{thm:scaling} and removing the universal quantifier over $\mc P$ with dualization, we can write the description of $\Theta(x_0)$ as:

\begin{equation}
\label{eq:defTheta}
\Theta(x_0) =  \left\{\theta \;\middle |
\begin{aligned} 
&\exists (\mbf M, \mbf v, \mbf Z) \\
&\mbf Z \geq 0\\ 
&\Sigma \mbf{v} + \mbf{Z}^Th \leqslant \sigma+ \Xi x_0 \\
&\mbf{Z}^T S= \Sigma\mbf{M} \\
& \Gamma\mbf M = \diag (\theta_1), \, \Gamma\mbf v = \theta_2\\
&\mbf M \in \mc{LT}
\end{aligned}
\right\} 
\end{equation}

$\Theta(x_0)$ is the projection of a set defined by a family of linear equalities and inequalities, and therefore is convex.
\end{proof}

Note that we do not need to explicitly compute the projection to optimize over $\theta$. From a practical point of view, it means that we can optimize over all possible component-wise scaling of a polyhedral disturbance set efficiently. This includes, as a particular case, uniform scalings of polyhedron $\mc T$ if we consider matrices $\Lambda = \mu \mc I_N$. This allows us to find the largest volume reference set of given shape for a given horizon.

From Lemma~\ref{thm:scaling} and Lemma~\ref{thm:cvxscale} we can jointly optimize over the admissible control policies and reference sets in a computationally tractable way. This opens the possibility of re-optimizing the control policies after each step during closed loop operation. In such applications, availability of the initial admissible control policy that guarantees tracking and infinite horizon feasibility, ensures the recursive feasibility of the optimization problem, if the tracking requirement is not changed.

\subsection{An implicit terminal set \label{sc:invariantSet}}

We have shown how to find a control policy that will ensure \emph{input trackability} as specified by the conditions~\eqref{eq:trackcon}, and how to optimize over the reference set in a computationally tractable way, assuming that the terminal set $\mbb X_f$ is given. However, finding an explicit description of a controlled invariant set is usually difficult. In this section, we introduce an implicitly defined terminal condition, which ensures infinite horizon feasibility. The method scales well with dimension, as it does not require explicit set calculations.

From \eqref{eq:densedyn} and \eqref{eq:ddistfeed}, the terminal state $x_N$ is given by an affine function of the reference $\mbf r$

\begin{equation}
x_N=\bar{A} x_0 + \bar{B}\mbf M\mbf r +\bar{B}\mathbf{v} ,
\label{eq:xss}
\end{equation}
where $\bar{A} := \begin{bmatrix} 0 & \mc{I}_{n_x} \end{bmatrix} \mbf{A}$, $\bar{B} := \begin{bmatrix} 0 & \mc{I}_{n_x} \end{bmatrix} \mbf{B}$.

From the discussion in Section \ref{sc:formulation}, we require that $x_N$ lies in a controlled invariant set for all values of $\mbf r$. Note that this differs from the standard robust invariance condition since after the horizon of $N$ steps, there is no further reference to track and therefore no uncertainty. We follow the idea of \cite{limon_mpc_2008}, by enforcing that $x_N$ is a feasible steady state of the system for each value of the reference $\mbf r$:

\begin{equation}
\begin{aligned}
&x_{N}=Ax_{N}+Bu_{N}\\
&Fx_N+ Gu_N \leq c
\end{aligned}
\label{eq:conss}
\end{equation}

The input at the $N^{\text{th}}$ step $u_N$ is not specified by the control policy $\mbf{u = Mr + v}$, so we propose again an affine parametrization 
\begin{equation}
u_{N}=\mbf{M}_{ss}\mbf{r}+\mbf{v}_{ss}\enspace.
\label{eq:uss}
\end{equation} 

Combining~\eqref{eq:xss}-\eqref{eq:uss} gives the conditions
\begin{subequations} \label{eq:termconditions} 
 \begin{align}
& \mathbf{LM} + B\mbf M_{ss} = 0 \label{eq:term1} \\
& \mathbf{Lv} + B \mbf{v}_{ss} + \mathbf {T} x_0= 0\label{eq:term2}\\
&F \bar{B}[\mbf{Mr+v}] + G[\mbf{M}_{ss} \mbf{r} + \mbf{v}_{ss}  ] \leqslant c - F\bar{A} x_0 \label{eq:term3}
 \end{align}
 \end{subequations}
 where $\mathbf{L} := \left[A-I_{n_x}\right]\bar{B}$ and $\mbf T:= \left[A-I_{n_x}\right] \bar{A}  $.

The conditions \eqref{eq:termconditions} ensures that the control policy defined by $(\mbf{M, v}, \mbf{M}_{ss}, \mbf{v}_{ss})$ is able to drive the system to the set of admissible steady states, which is a control invariant set. Equations~\eqref{eq:term1}, \eqref{eq:term2} are linear equality constraints on $\mbf{M}, \mbf{M_{ss}},\mbf{v},\mbf{v_{ss}}$ and equations~\eqref{eq:term3} is an inequality identical in form to the one appearing in \eqref{eq:dfset}. Therefore, they can be handled following exactly the same principle as the one discussed in Lemma~\ref{thm:dualization} to recover tractable convex constraints. 

\remark{In some applications, such as building control, it is preferable to keep the system in a periodic steady state, due to the periodic nature of the disturbances and constraints. Periodic steady states can be easily incorporated into the definition of the set~\eqref{eq:termconditions} by representing the periodic system as a lifted version of the original system~\eqref{eq:dyn}, that describes the state evolution throughout a period of $N_P$ steps, and modifying the equations~\eqref{eq:xss}-\eqref{eq:uss}.

\begin{remark}
Addition of steady state control policy parametrization~$(\mbf{M}_{ss}, \mbf{v}_{ss})$ and conditions~\eqref{eq:termconditions} does not effect the results of Section~\ref{sc:ScaleDist} regarding the scaling of the reference set, since an admissible~$(\mbf{\tilde{M}}_{ss}, \mbf{\tilde{v}}_{ss})$ for the scaled reference set can be constructed as explained in the proof of Lemma~\ref{thm:scaling}.
\end{remark} 

Putting together equations~\eqref{eq:termconditions} and \eqref{eq:defTheta} leads us to solve the following convex problem:

\begin{equation}
\begin{array}{lll}
\text{minimize } & J \\
\text{subject to} &
\mbf Z,\mbf{Z_{ss}} \geq 0\\ 
&\Sigma \mbf{v} + \mbf{Z}^Th \leqslant \sigma+ \Xi x_0 \\
&\mbf{Z}^T S= \Sigma\mbf{M} \\ 
& \Gamma\mbf M = \diag (\theta_1), \, \Gamma\mbf v = \theta_2\\
&\mbf M \in \mc{LT} \\
& \mathbf{LM} + B\mbf M_{ss} = 0\\
& \mathbf{Lv} + B \mbf{v}_{ss} + \mathbf {T} x_0= 0\\
&F \bar{B}\mbf{v} + G\mbf{v}_{ss}  + \mbf{Z_{ss}}^Th \leqslant c - F\bar{A} x_0 \\
&\mbf{Z_{ss}}^TS = F \bar{B}\mbf{M} + G\mbf{M}_{ss}
\label{eq:finalprog}
\end{array}
\end{equation}

where the optimization variables are $\theta_1 \in \mbb{R}^N,\theta_2 \in \mbb{R}^N, \mbf{M} \in \mbb{R}^{Nn_u \times N}, \mbf{M_{ss}}\in \mbb{R}^{N_Pn_u \times N},\mbf v \in \mbb{R}^{Nn_u}, \mbf{v_{ss}} \in \mbb{R}^{N_Pn_u}, \mbf Z \in \mbb{R}^{n_h \times  n_c }$ and $\mbf{Z_{ss}} \in \mbb{R}^{n_h \times  {n_c}_P }$. $\mbf{Z_{ss}}$  is the matrix of Lagrange multipliers for equation~\eqref{eq:term3}, $n_h$ is the number of inequalities that represent the reference set $\mc{P}$, $n_c$ and ${n_c}_P$ are the dimensions of the inequalities described in~\eqref{eq:densedyn} and ~\eqref{eq:conss}}, respectively. The cost function $J$ needs to be convex in the optimization variables and is chosen according the problem at hand ({\it cf} Section~\ref{sc:Applications}). 

\section{Applications \label{sc:Applications}}

In this section, we present an application of the developed method, in which the flexibility in the power consumption of a building, around a nominal consumption profile, is characterized by means of a simple battery. This is highly desirable for assessing the capabilities of buildings to participate in demand response programs, in which participants are rewarded for their flexibility in consumption.

 We consider a simplified problem, where the external disturbances, such as weather, occupancy and solar radiation, are considered to be perfectly forecast and periodic. In a practical setting, the controller should also be robustified against uncertainty in the external disturbances, but we omit this issue in order to underline the presented methodology. Under the assumption of fully known disturbances and the linearity of the system, the building model can be described by~\eqref{eq:dyn}. We further assume that the thermal power consumption of the building is equal to the electricity consumption of the HVAC system. The building and the external disturbances represent a small office building in summer conditions, for which the model is obtained with the OpenBuild toolbox \cite{gorecki_openbuild_2014}. The building consists of three zones and the power input to each zone is considered as a separate input. The total power consumption of the HVAC system of the building is simply the sum of all inputs:
\begin{equation}
p_i = \mbf{1}^Tu_i
\end{equation}  
where $\mbf{1}$ is a vector of ones and $p \in \mbb{R}$ represents the total power consumption of the HVAC system. The HVAC system is required to satisfy the temperature constraints in each zone.

For the consumption flexibility description, we use the following \emph{battery} model, with constraints on the state-of-charge $s \in \mbb{R}$, and the rate of charge $r \in \mbb{R}$.

\begin{equation}
\begin{aligned}
 s_{i+1}  = &as_i + r_i\\
 0 \leqslant &~s_i \leqslant s_{max} \\
 -r_{max} \leqslant &~r_i \leqslant r_{max} \enspace .\\ 
\end{aligned}
\label{eq:dynbat}
\end{equation}

For a finite horizon length of $N$, the constrained model~\eqref{eq:dynbat} serves as a reference generator and describes the polytopic reference set $\mathcal{P}$, as the set of admissible power input trajectories $\bf r$ of the battery. We fix the shape of $\mathcal{P}$ by setting the battery parameters as;
\begin{equation}
a = 1,\;\; s_0 = s_{max}/2,\;\; s_{max}/r_{max}=5
\label{eq:fixshape}
\end{equation}

Given these constraints, the reference set $\mathcal{P}$ is fully parameterized by the maximum power limit $r_{max}$:

\begin{equation}
\begin{aligned}
\mathcal{P}(r_{max}):=\{ \mbf{r} \; | \; S \mbf{r} \leq h r_{max}\}
\end{aligned}
\label{eq:battery_shape}
\end{equation}

where $S\in \mbb{R}^{4N\times N}$ and $h\in\mbb{R}^{4N}$ can be constructed according to~\eqref{eq:dynbat} and \eqref{eq:fixshape}. Since $r_{max}$ is a scalar, $\mc{P}(r_{max})$ can be re-formulated as:

\begin{equation}
\mathcal{P}(r_{max} ,\bar{\mbf{p}}_{act}) = r_{max}\mathcal{T} + \bar{\mbf{p}}_{act}, \;\; \mathcal{T}:= \{S\mbf{r} \leqslant h \}
\end{equation}
where $\bar{\mbf{p}}_{act}$ represents the nominal power consumption trajectory of the HVAC system during the active period of demand response participation.

For the cost function of the building, we consider the case where the building is asked for symmetric up - down flexibility and only paid for the power limits. At the beginning of each day, the building finds the optimal battery it can support for a specific activation period, and the corresponding affinely parameterized control policy that guarantees \textit{input trackability} of all possible battery trajectories. The building then offers the corresponding power limits to the demand response operator, while minimizing its cost for the next day. The optimal reference set commitment problem to be solved by the building is given by:
\begin{equation}
\begin{array}{lll}
\text{minimize } & c_1^T\bar{\mbf{p}} - c_2^T \mbf{1} r_{max}\\
\text{subject to} & \text{\eqref{eq:finalprog}}\\
 & \theta_1 = \mbf{1}r_{max}\\
 & \theta_2 = \bar{\mbf{p}}_{act}
\end{array}
\end{equation}
where $\bar{\mbf{p}}$ represents the nominal power consumption over the whole horizon. The linear cost function represents the cost of electricity and flexibility reward paid to the building for the active period. The flexibility reward is taken to be twice the price of electricity. The extra cost that might arise from the tracking realizations are assumed to be compensated. The battery is only defined during the activation period, which covers 10 hours and is between 8:00 and 18:00. The reference signal $r$ is updated every hour, which is the discretization time step of the problem. The building uses predictions for the next day and also aims for a periodic steady state with a period of one day. Thus, the horizon length is $N=24$, with an additional $N_P=24$ steps for the periodic steady state.

  The control policy resulting from the optimization is simulated with randomly drawn samples from the reference set, as shown in Figure \ref{fig:open}. The flexibility captured by the optimally scaled battery ensures a storage capacity of $41.2\text{ kWh}$ with a power range of $16.5\text{ kW}$, which represents 36\% of the maximum nominal consumption of the HVAC system, standing at $45.4\text{ kW}$.

\begin{figure} [tbph]
\centering 
\includegraphics[width=\textwidth]{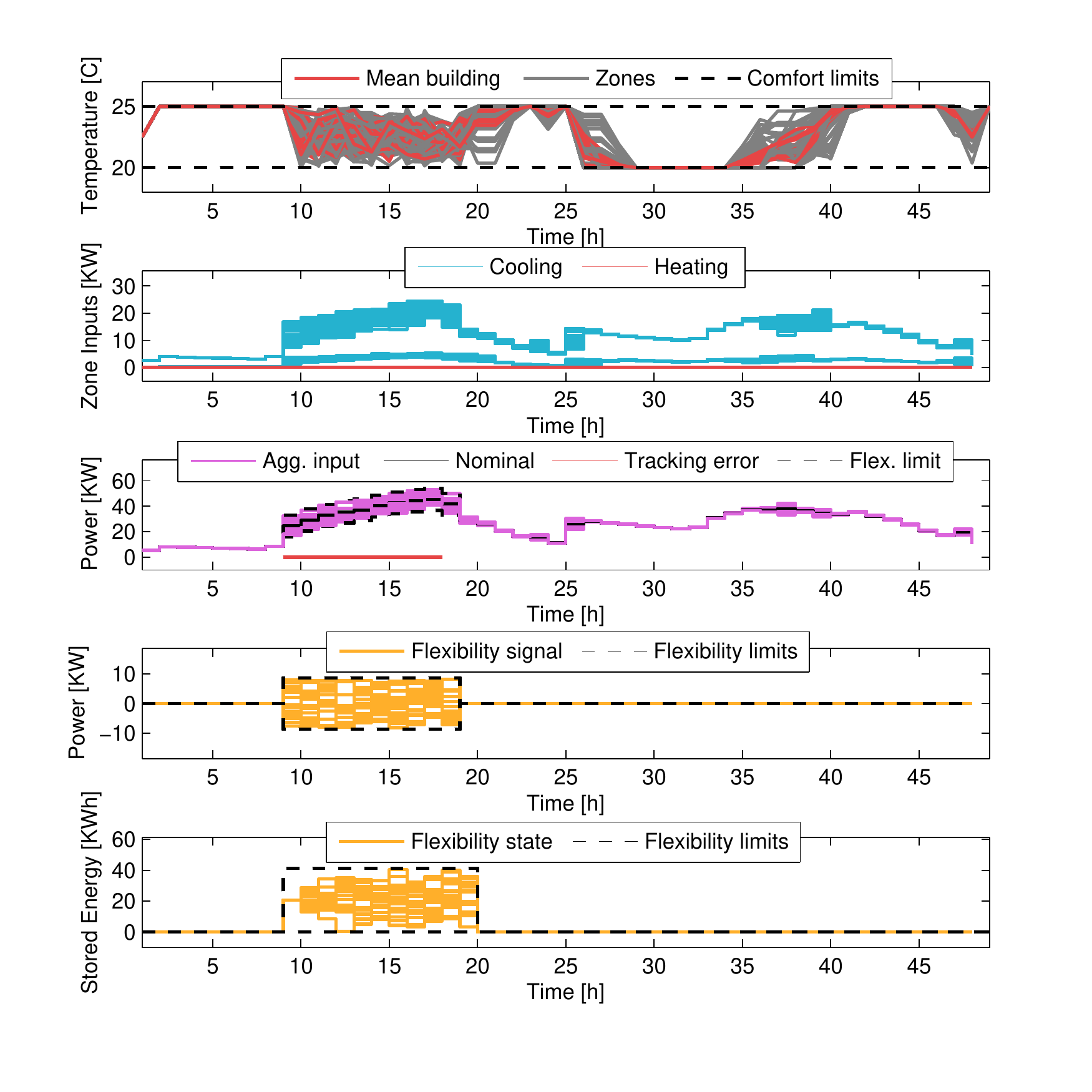}
\caption{Randomly sampled open loop trajectories, where the building uses affinely parameterized control policy found at the initial step. The top three figures represent the evolution of building outputs, inputs, and total power consumption of the HVAC system, while the last two figures show the flexibility signal $r$ and the state-of-charge $s$ described by the battery equations~\eqref{eq:dynbat}. Note that the second day of the simulation represents the periodic steady states computed by the control policy.}
\label{fig:open}
\end{figure}
%

\section{Conclusion}
We demonstrate in this paper how to certify in advance that a system can track (as a function of its inputs) references drawn from a reference signal set. The method utilizes a causal affine reference feedback policy to formulate a convex optimization problem that certifies trackability on a finite horizon window. Use of a terminal invariant set constraint also certifies that the system stays feasible indefinitely after the tracking period. The tracking reference set can be optimized tractably in some cases, allowing one to find the largest dimension-wise scaling of a set that can be tracked. An implicit characterization of a terminal set as the set of all feasible steady states is proposed for this particular setup. Results are illustrated by computing the power consumption flexibility of a building HVAC system. 

\bibliographystyle{IEEEtran}
\bibliography{IEEEabrv,biblio}

\end{document}